\documentclass[12pt]{amsart}

\oddsidemargin=0.5cm
\evensidemargin=0.5cm
\textwidth=16cm
\textheight=21cm

\usepackage{xcolor,hyperref}
\usepackage{comment}
\definecolor{red}{rgb}{1.00,0.00,0.00}
 
\numberwithin{equation}{section}

\newtheorem{theorem}{Theorem}[section]

\newtheorem{corollary}[theorem]{Corollary}

\newtheorem{remark}{Remark}
\newtheorem{definition}[theorem]{Definition}

\def\sA{\langle A\rangle}
\def\sB{\langle B\rangle}
\def\sC{\langle C\rangle}

\def\C{\mathfrak{C}}
\def\N{\mathbb{N}}
\def\Q{\mathbb{Q}}

\def\a{{\bf a}}
\def\b{{\bf b}}

\newcommand{\bs}{\backslash}

\newcommand{\la}{\langle}
\newcommand{\ra}{\rangle}

\definecolor{purple(x11)}{rgb}{0.63, 0.36, 0.94}

\def\sA{\langle A\rangle}
\def\sB{\langle B\rangle}
\def\sC{\langle C\rangle}

\title{Wilf inequality is preserved under Gluing of Semigroups}
\author{Srishti Singh and Hema Srinivasan}
\address{University of Missouri, Columbia, MO 65211, USA}
\date{March 2023}
 
\thanks{H. Srinivasan is supported by grants from Simons Foundation. S. Singh is supported by Dissertation Year Fellowship from Missouri.}
\begin{document}

\maketitle
 
\begin{abstract} Wilf Conjecture on numerical semigroups is a question posed by H.Wilf in 1978 and is an inequality connecting the Frobenius number, embedding dimension and the genus of the semigroup.  The conjecture is still open in general.  We prove that this Wilf inequality is preserved under gluing of numerical semigroups.  If the numerical semigroups minimally generated by $A = \{ a_1, \ldots, a_p\}$ and $B = \{ b_1, \ldots, b_q\}$ satisfy the Wilf inequality, then so does their gluing which is minimally generated by $C =k_1A\sqcup k_2B$. We discuss the extended Wilf's Conjecture in higher dimensions under the process of gluing.
\end{abstract}

\section{Introduction}
A numerical semigroup $\la A\ra$ is a submonoid of $\N$  minimally generated by  $A= \{a_1, \ldots a_p\}$ where  $\gcd (a_1, \ldots, a_p)= 1$.  
Such a semigroup will contain all but finitely many positive integers.  The largest positive integer not in the numerical semigroup $\la A\ra$ is called the Frobenius number $F_A$.  
We denote by $[n_A]$ the set of $n_A$ elements in $\la A \ra$ that are less than $F_A$. Thus, the number of gaps is $F_A- n_A$. Moreover, $n_A \ge 1$ as $0\in \la A\ra$ for every numerical semigroup $A$.  

One of the most intriguing questions in the realm of numerical semigroups is the Wilf Conjecture, named after H. Wilf, posed in 1978. This conjecture establishes an inequality that relates three fundamental invariants of a numerical semigroup: the minimal number of generators (or the embedding dimension), the Frobenius number, and the number of gaps. To be precise, the Wilf inequality for a numerical semigroup $A$, minimally generated by $p$ elements, is $pn_A \ge F_A+1$. Despite significant efforts and a multitude of research, the Wilf Conjecture remains unsolved.

This note addresses an important construction in the realm of numerical semigroups: gluing. We demonstrate that gluing preserves the Wilf property in numerical semigroups, offering a means to generate more semigroups satisfying the conjecture. The preservation of the Wilf property under gluing is a matter of significant interest, as it allows for its extension to higher dimensions. In Section $\ref{affinecase}$, we discuss the ramifications on the Extended Wilf Conjecture under gluing of semigroups in higher dimensions.  The concept of gluing in numerical semigroups arose in the successful attempts to characterize the complete intersection numerical semigroups \cite{De} and \cite{ros}.  Gluing is known to preserve singularity classes such as Cohen-Macaulay and Gorenstein (\cite{GS22}, Theorem $1.5$). It is especially of interest in the numerical case, as any two such groups can be glued.

When $\sA$ is symmetric, $F_A+1 = 2n_A$, hence, Wilf Conjecture is true for all symmetric semigroups, i.e. for all Gorenstein semigroups or semigroup rings of type $1$.  Further, it is known from \cite{Fr}, that for any numerical semigroup of type $t$, $(t+1) n_A \ge F_A+1$.  Thus, Wilf Conjecture is true whenever the type is less than the embedding dimension. In particular, all numerical semigroups generated by arithmetic sequence must satisfy Wilf inequality as their type is less than the embedding dimension (\cite{A}, Prop. $20$).  However, the type of a numerical semigroup can be any positive number when the embedding dimension is four or more. For an example, see \cite{Fr}. This integration of gluing and the notable Wilf conjecture highlights the impact of gluing as a construction method for creating new numerical semigroups that retain the desirable Wilf property.

\section{Gluing Preserves Wilf's Inequality }
\begin{definition}
Let $A = \{a_1, \ldots, a_p\}$ and $B= \{b_1, \ldots, b_q\}$ minimally generate two numerical semigroups $\sA$ and $\sB$. A numerical semigroup $\sC$ is said to be the gluing of $\sA$ and $\sB$ if the set of minimal generators of $C$ is a disjoint union $k_1 (a_1, \ldots, a_p) \sqcup k_2(b_1, \ldots, b_q)$ where $k_1\in \sB$ but not in $B$, and $k_2 \in \sA$ but not in $A$. 
\end{definition}
\begin{remark}
    Since $\sC$ is a numerical semigroup, $k_1$ and $k_2$ must be relatively prime.
\end{remark}

In the domain of numerical semigroups, Delorme initially introduced this type of decomposition (\cite{De}) to characterize complete intersections. Delorme demonstrated that a numerical semigroup is a complete intersection if and only if it is a gluing of two complete intersections and more impressively, the converse, that is all complete intersections with an embedding dimension of at least three must be a gluing of two other complete intersections.

Later, \cite{ros} Rosales developed a criterion for gluing,  initially in the cntext of numerical semigroups. This characterization extends to higher dimensions. This notion has since been extensively studied in \cite{GS1}, and \cite{GS2}. Notably, various invariants of the glued semigroup $\sC$ can be efficiently computed from the invariants of the constituent semigroups $\sA$ and $\sB$. 

We denote a gluing of $\sA$ and $\sB$ as $C = k_1 A \Join k_2 B$. The formula for the Frobenius number $F_C$ in the glued semigroup $\sC$ can be derived using the regularity formula of the graded Betti numbers of $k[A]$ and $k[B]$, as demonstrated in \cite{GS1}. However, in this note, we provide a direct and straightforward proof.
\begin{theorem}\label{Frob}
Let $A = \{a_1, \ldots, a_p\}$ and $B= \{b_1, \ldots, b_q\}$ minimally generate two numerical semigroups $\sA$ and $\sB$ and let  $C = k_1A\sqcup k_2B$ so that $\sC = \sA \Join \sB$. 
Then $$F_C = k_1F_A+k_2F_B+k_1k_2$$
\end{theorem}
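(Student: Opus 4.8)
The plan is to pin down $F_C$ by establishing the two facts that characterize the Frobenius number of $\sC$: first, that $N := k_1F_A + k_2F_B + k_1k_2$ is \emph{not} in $\sC$, and second, that every integer $m > N$ \emph{is} in $\sC$. Throughout I would use only elementary facts: that $\sC = k_1\sA + k_2\sB$, that $k_2 \in \sA$ and $k_1 \in \sB$ by the definition of gluing, that $\gcd(k_1,k_2)=1$, and the defining property of the Frobenius number that every integer strictly larger than $F_A$ (resp.\ $F_B$) lies in $\sA$ (resp.\ $\sB$).

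The technical core is a short two-part lemma about how residue classes modulo $k_2$ sit inside $\sA$ (and, symmetrically, classes modulo $k_1$ inside $\sB$). Part (a): since $k_2 \in \sA$, no element of $\sA$ that is congruent to $F_A$ modulo $k_2$ can be $\le F_A$ --- for if $a \in \sA$ with $a \equiv F_A \pmod{k_2}$ and $a \le F_A$, then $a, a+k_2, a+2k_2, \dots, F_A$ would all lie in $\sA$, forcing $F_A \in \sA$, a contradiction; hence any such $a$ satisfies $a \ge F_A + k_2$. Part (b): the smallest element of $\sA$ in an \emph{arbitrary} residue class modulo $k_2$ is at most $F_A + k_2$ --- if $a$ is that minimal element, then $a - k_2 \notin \sA$, so either $a < k_2$ or $a - k_2$ is a gap, and either way $a \le F_A + k_2$.

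With the lemma in hand the two facts are short. For $N \notin \sC$: if $N = k_1a + k_2b$ with $a \in \sA$, $b \in \sB$, then reducing modulo $k_2$ and using $\gcd(k_1,k_2)=1$ gives $a \equiv F_A \pmod{k_2}$, so $a \ge F_A + k_2$ by part (a); substituting back yields $k_2 b = N - k_1 a \le k_2 F_B$, i.e.\ $b \le F_B$. But reducing $N$ modulo $k_1$ forces $b \equiv F_B \pmod{k_1}$, and the symmetric form of part (a) then gives $b \ge F_B + k_1 > F_B$, a contradiction. For $m > N \Rightarrow m \in \sC$: pick $a$ to be the smallest element of $\sA$ with $k_1 a \equiv m \pmod{k_2}$, so $a \le F_A + k_2$ by part (b), and set $b := (m - k_1 a)/k_2 \in \Z$; then $k_2 b = m - k_1 a > N - k_1(F_A + k_2) = k_2 F_B$, so $b > F_B$, hence $b \in \sB$, and $m = k_1 a + k_2 b \in \sC$. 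Together these give $F_C = N$.

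The only place requiring an actual idea, and hence the main obstacle, is the residue-class lemma: one must notice that the hypothesis $k_2 \in \sA$ (not just $k_2 \ge 1$) is precisely what makes each residue class modulo $k_2$ behave like a ray inside $\sA$, so that membership is controlled by a single threshold per class --- and symmetrically for $k_1$ and $\sB$. After that, everything is routine bookkeeping with the two congruences obtained by reducing $k_1 a + k_2 b = m$ modulo $k_1$ and modulo $k_2$, and the edge cases (such as $\sB = \N$) are harmless.
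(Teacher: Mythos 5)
Your proof is correct, and it rests on the same two pillars as the paper's: the coprimality of $k_1,k_2$ and the memberships $k_2\in\sA$, $k_1\in\sB$. The difference is one of organization. The paper argues both directions ad hoc: for $N+t\in\sC$ it invokes B\'ezout to write $t=mk_1-nk_2$ with $0<n<k_1$ and exhibits $N+t=k_1(F_A+m)+k_2(F_B+k_1-n)$ with both coordinates strictly above the respective Frobenius numbers; for $N\notin\sC$ it extracts an integer $t$ from $F_A-\sum r_ia_i=k_2t$ and does a case analysis on its sign, each case producing $F_A\in\sA$ or $F_B\in\sB$. You instead isolate a reusable residue-class lemma (essentially a statement about the Ap\'ery set of $k_2$ in $\sA$): part (a), that the hypothesis $k_2\in\sA$ forces every element of $\sA$ congruent to $F_A$ modulo $k_2$ to be at least $F_A+k_2$, is exactly the content of the paper's sign-of-$t$ case analysis in different clothing, while part (b), the threshold $F_A+k_2$ for minimal representatives, replaces the B\'ezout step (note your part (b) does not even need $k_2\in\sA$). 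What your packaging buys is uniformity --- both halves of the theorem reduce to one threshold statement per residue class, and the non-membership argument becomes two applications of the same lemma --- at the cost of a slightly longer setup; the paper's version is more direct but leaves the structural reason (one threshold per class) implicit. Both are complete; your edge cases ($F_A=-1$ when $\sA=\N$) check out since $F_A\ge -1$ always.
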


\begin{proof}
Firstly, since $k_1, k_2$ are relatively prime, then for any positive integer $t\ge 1$, we can find positive integers $m$ and $n$, with $0< n < k_1$ such that $mk_1-nk_2 = t$. Then $ k_1F_A+k_2F_B+k_1k_2+t = k_1(F_A+m)+k_2(F_B -n+k_1) \in \sC$. We will now show that $ k_1F_A+k_2F_B+k_1k_2 \not\in \sC$. 

Suppose $k_1F_A +k_2F_B+k_1k_2 = k_1\sum_{i=1}^pr_ia_i +k_2 \sum_{j=1}^q s_jb_j$ for some $r_i, s_j \in \N$, $i \in \{1,...,p\}, j \in \{1,...,q\}$. Then we have $k_1( F_A-\sum_{i=1}^pr_ia_i) = k_2(\sum_{j=1}^qs_jb_j- k_1-F_B)$. 

Since $k_1 $ and $k_2$ are relatively prime and $F_A\not\in \sA$, we must have $F_A-\sum_{i=1}^p r_ia_i= k_2t$ for some non zero integer $t$. But then $F_A = \sum_{i=1}^pr_ia_i +tk_2 \in \sA$ if $t>0$.  So, $t<0$. Hence, $F_B= -tk_1-k_1+\sum_{j=1}^q s_j b_j = \sum_{j=1}^q s_jb_j+(-1-t) k_1\in \sB$, which is not possible either. So, $k_1F_A+k_2F_B+k_1k_2 \not\in \sC$. Thus, $F_C =  k_1F_A+k_2F_B+k_1k_2$.
\end{proof}

The following theorem establishes that if two numerical semigroups $A$ and $B$ satisfy the Wilf Conjecture, then so does every gluing of $A$ and $B$. 
\begin{theorem}\label{Gluingiswilf}
Suppose $\sA$ and $\sB$ satisfy the Wilf inequality. Then so does  $\sC = \sA\Join \sB$.
\end{theorem}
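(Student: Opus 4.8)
The plan is to express all three Wilf invariants of $C$—the embedding dimension, the Frobenius number $F_C$, and the count $n_C$—in terms of the corresponding invariants of $A$ and $B$, and then reduce the Wilf inequality $(p+q)n_C \ge F_C + 1$ for $C$ to the hypotheses $p\,n_A \ge F_A + 1$ and $q\,n_B \ge F_B + 1$. The embedding dimension is easy: since $C = k_1 A \sqcup k_2 B$ is a disjoint union of $p$ and $q$ minimal generators, $C$ has embedding dimension $p + q$. The Frobenius number is handled by Theorem~\ref{Frob}: $F_C = k_1 F_A + k_2 F_B + k_1 k_2$. So the crux is to get a good lower bound on $n_C$, the number of elements of $\sC$ strictly below $F_C$.

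\textbf{Counting small elements of the gluing.}
First I would record the basic containment: if $a \in \sA$ and $b \in \sB$, then $k_1 a + k_2 b \in \sC$. So I want to produce many elements of $\sC$ below $F_C = k_1 F_A + k_2 F_B + k_1 k_2$ of this shape. The natural candidates are $k_1 a + k_2 b$ with $a \in [n_A]$ (i.e. $a \in \sA$, $a < F_A$) and $b \in [n_B]$; such an element is at most $k_1(F_A - 1) + k_2(F_B - 1) < F_C$, so it is genuinely counted in $n_C$. The key injectivity point is that the map $(a,b) \mapsto k_1 a + k_2 b$ is one-to-one on $\sA \times \sB$ when $\gcd(k_1,k_2)=1$ and we restrict to a bounded range—indeed if $k_1 a + k_2 b = k_1 a' + k_2 b'$ then $k_1 \mid (b - b')$ and $k_2 \mid (a - a')$, and the size constraints force $a = a'$, $b = b'$. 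This already gives $n_C \ge n_A n_B$. That alone is not enough, so I would enlarge the count by also throwing in elements of the form $k_1 a$ for $a \in \sA$ with $a$ up to roughly $F_A + k_2$, and more generally $k_1 a + k_2 b$ where we allow $a$ or $b$ to range a bit past $F_A$ or $F_B$ (using that every integer $> F_A$ lies in $\sA$), being careful to stay below $F_C$ and to keep the representations distinct. The bookkeeping here—partitioning the integers below $F_C$ according to their residue mod $k_1$ (or mod $k_2$) and counting how many are hit—is where the real work lies.

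\textbf{Closing the inequality.}
Once I have a bound of the form $n_C \ge n_A n_B + (\text{extra terms involving } k_1, k_2, n_A, n_B, F_A, F_B)$, I would plug into $(p+q) n_C$ and compare with $F_C + 1 = k_1 F_A + k_2 F_B + k_1 k_2 + 1$. Using $p n_A \ge F_A + 1$ and $q n_B \ge F_B + 1$ I get $p n_A n_B \ge (F_A+1) n_B$ and $q n_A n_B \ge (F_B + 1) n_A$, so $(p+q) n_A n_B \ge (F_A+1) n_B + (F_B+1) n_A$. I then need the remaining slack, together with the extra terms in the $n_C$ bound, to absorb $k_1 F_A + k_2 F_B + k_1 k_2 + 1$. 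Here one uses crude but true inequalities such as $n_A \le F_A$ (in fact $n_A \le F_A$, with room to spare since there is at least one gap, e.g. $F_A$ itself) and $k_1 \le$ some element of $\sB$, hence comparisons between $k_1$ and the $b_j$'s, to show the leftover is nonnegative.

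\textbf{Main obstacle.}
The genuine difficulty is not the algebra of combining the two Wilf inequalities but getting the lower bound on $n_C$ sharp enough: the naive bound $n_C \ge n_A n_B$ loses the linear-in-$k_i$ contributions that are exactly what is needed to dominate the $k_1 F_A + k_2 F_B + k_1 k_2$ on the right. So the heart of the proof is a careful counting argument showing $n_C$ is at least $n_A n_B$ plus enough additional elements—those coming from integers in $[F_A, F_A + k_2)$ scaled by $k_1$ and their analogues—and verifying these are all distinct and all lie below $F_C$. I would expect to spend most of the proof on that count, and then the final inequality should fall out by substitution.
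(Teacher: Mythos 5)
Your overall strategy matches the paper's: take $F_C = k_1F_A+k_2F_B+k_1k_2$ from Theorem \ref{Frob}, note the embedding dimension is $p+q$, and reduce everything to a lower bound on $n_C$. But there are two genuine gaps. First, the one concrete count you actually assert is wrong: the map $(a,b)\mapsto k_1a+k_2b$ is \emph{not} injective on $[n_A]\times[n_B]$. From $k_1(a-a')=k_2(b'-b)$ you get $k_2\mid(a-a')$ and $k_1\mid(b-b')$, but the size constraints are only $|a-a'|<F_A$ and $|b-b'|<F_B$, and nothing forces $F_A\le k_2$ or $F_B\le k_1$ (for instance $A=\{3,5\}$, $k_2=6<F_A=7$, so $a=6$ and $a'=0$ collide with suitable $b,b'$ as soon as $F_B>k_1$). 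So $n_C\ge n_An_B$ is unjustified. The paper avoids exactly this by letting one coordinate range over only $k_1$ (resp.\ $k_2$) consecutive values: for $x\in[n_A]$ and $1\le i\le k_1$ the elements $k_1x+k_2(F_B+i)$ are pairwise distinct because $k_1\mid(j-i)$ with $|j-i|<k_1$ forces $i=j$; this gives $n_C\ge k_1n_A$, and adding the $\lfloor k_1/2\rfloor\lfloor k_2/2\rfloor$ elements $k_1(F_A+i)+k_2(F_B+j)$ yields $n_C\ge k_1n_A+(k_1-1)(k_2-1)/4$ and symmetrically $n_C\ge k_2n_B+(k_1-1)(k_2-1)/4$. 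Note also that even if $n_C\ge n_An_B$ were true it would not suffice: $k_1$ can be arbitrarily large while $n_B$ stays fixed (take $B=\{2,3\}$, so $n_B=1$, with $k_1$ any large element of $\sB$), so $(p+q)n_An_B$ cannot dominate the term $k_1F_A$; the linear-in-$k_i$ bounds $k_1n_A$ and $k_2n_B$ are indispensable, not merely convenient refinements.

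Second, everything you flag as ``where the real work lies'' is in fact essentially the entire proof, and none of it is carried out: neither the corrected count, nor the closing estimate (the paper rewrites $F_C=k_1(F_A+1)+k_2(F_B+1)+(k_1-1)(k_2-1)-1$, splits $F_C/n_C$ into two fractions, and bounds one by $p$ using $n_C\ge k_1n_A$ and the other by $q$ using the second bound together with $q\ge4$), nor the cases that argument does not cover: $q\le3$, which the paper handles via the type bound $(t+1)n_C\ge F_C+1$ of Fr\"oberg et al.\ and the multiplicativity of type under gluing, and $p=1$, which gets a separate direct computation. As written, the proposal is a plausible plan containing one false step and no completed step.
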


\begin{proof}
Suppose $A = \{a_1, \ldots, a_p\}$ and $B= \{b_1, \ldots, b_q\}$ minimally generate two numerical semigroups $\sA$ and $\sB$, and let  $C = k_1A\sqcup k_2B$ so that $\sC = \sA\Join \sB$. 

Thus, $k_2 \in \la A \ra$, not in $A$, and  $k_1\in \la B \ra$ and not in $B$, and $(k_1,k_2) =1$. By Theorem $\ref{Frob}$, $F_C = k_1F_A+k_2F_B+k_1k_2$.  Without loss of generality, we may take $p\le q$.  

To begin with, we have a theorem of Froberg et al (\cite{Fr}) which states $(t+1)n_C \ge F_C+1$ for all numerical semigroups of type $t$. By \cite{GS1}, the type of $\sC$ is the product of types of $\sA$ and $\sB$. When the embedding dimension of $\sA$ or $\sA$ is 1 or 2, the type of $\sC$ is 1 as it is a complete intersection. Thus, if $p\le 2$ and $q\le 2$, then  $C$ is also a complete intersection and $F_C< 2n_C \le (p+q) n_C$ as required.  

If $p\le q = 3$, then again since $t\le 2$, we get that $$F_C < (t+1)n_C \le 3n_C \le (p+q)n_C.$$ Thus, we just need to consider the case when $q\ge 4$.

Now, for all $x\in [nA], 1\le i\le k_1$, $k_1x+ k_2(F_B+i) \in [n_C]$. Further, $$k_1x+ k_2(F_B+i) = k_1x'+k_2(F_B+j) \implies k_1(x-x') = k_2(j-i).$$ But $k_1, k_2$ are relatively prime, and $j-i\le k_1$. So $j=i,x=x'$. Hence, all these aforementioned elements are distinct in $[n_C]$.

Similarly, for $1\le i\le \lfloor k_2/2\rfloor, 1\le j\le \lfloor k_1/2\rfloor$,
the elements $k_1(F_A+i)+k_2(F_B+j) \in [n_C]$, and all these $\lfloor k_2/2\rfloor \lfloor k_1/2\rfloor$ elements are distinct.

So, $$n_C \ge k_1n_A+ \lfloor k_2/2\rfloor \lfloor k_1/2\rfloor \ge k_1n_A+ (k_1-1)(k_2-1)/4$$

Similarly, $$n_C \ge k_2n_B+ \lfloor k_2/2\rfloor \lfloor k_1/2\rfloor \ge k_2n_B+ (k_1-1)(k_2-1)/4$$

Now suppose $q\ge p\ge 2$, so that $F_A $ and $F_B$ are both positive integers. Hence, 
\begin{multline*}
    F_C= k_1F_A+ k_2(F_B+k_1)= k_1(F_A+1) +k_2(F_B+1) + k_1k_2-k_1-k_2 \\
    = k_1(F_A+1)+k_2(F_B+1) + (k_1-1)(k_2-1)-1
\end{multline*}
Since $A$ and $B$ satisfy the Wilf inequality, $F_A +1\le pn_A$ and $F_B+1\le qn_B$. It now follows that 
\begin{align*}
    F_C = &k_1(F_A+1)+k_2(F_B+1) + (k_1-1)(k_2-1)-1  \\ 
    \le &k_1pn_A+k_2qn_B+ (k_1-1)(k_2-1)-1
\end{align*}
Next, $${\frac {F_C}{n_C} } = {\frac {k_1(F_A+1)}{n_C} }+{\frac {k_2(F_B+1) + (k_1-1)(k_2-1)-1}{n_C}}$$ Recall that $n_C\ge k_1n_A$, and $n_C \ge k_2n_B+ (k_1-1)(k_2-1)/4$. Hence,  $${\frac {F_C}{n_C} } \le {\frac {k_1pn_A}{k_1n_A} }+{\frac {k_2qn_B + (k_1-1)(k_2-1)-1}{k_2n_B+ (k_1-1)(k_2-1)/4}}$$ But $${\frac {k_2qn_B + (k_1-1)(k_2-1)-1}{k_2n_B+ {\frac {(k_1-1)(k_2-1)}{4}} }}\le q {\frac {k_2n_B + {\frac {(k_1-1)(k_2-1)-1}{q}}}{k_2n_B+ {\frac{(k_1-1)(k_2-1)}{4}}} }< q$$ since $q\ge 4$.  

Thus, $F_C <( p+q)n_C$, as long as $p\ge 2$. It remains to show the inequality if $p=1$ and $q\ge 4$. In that case, 
\begin{align*}
(q+1)n_C - F_C  \ge  &(q+1) (k_2n_B + (k_1-1)(k_2-1)/4) - k_2F_B- k_1(k_2-1) \\
 = & k_2(qn_B-F_B-1) + (k_1k_2-k_2-k_1+1)\left(\frac{q+1}{4} - 1\right) + k_2n_B -1 \\
> & 0,
\end{align*}
since $qn_B \ge F_B+1$ , ${\frac {q+1}{4}} \ge 1$ and $k_2n_B -1\ge 2-1>0$.

So, we have that Wilf Conjecture holds for $\sC$ as well. 
\end{proof}
\renewcommand{\a}{\bf a}
\renewcommand{\b}{\bf b}

\section{Gluing in Higher Dimensions}\label{affinecase}
We now want to explore the same question in higher dimension, namely for subsemigroups of $\N^n$ as opposed to in $\N$. A finitely generated submonoid of $\N^n$ is also called an \textit{affine semigroup}. Thus, if $A $ is an $n\times p$ matrix in $\N$ with columns $\mathbf a_i, 1\le i\le n$, representing elements of $\N^n$, the monoid generated by the columns of $A$ in $\N^n$ is an affine semigroup $\sA$.  As before without loss of generality, we may assume that the entries in $A$ are relatively prime as $\sA$ and $\langle dA\rangle$ are isomorphic as monoids.    The question then becomes, given two numerical semigroups $\sA$ and $\sB$ in $\N^n$, $n \geq 2$, is there a generalized notion of this notable conjecture by Wilf, and, if so, does a gluing of $\sA$ and $\sB$ satisfy this notion if $\sA$ and $\sB$ do?

The concept of gluing of numerical semigroups in higher dimensions has been elucidated in \cite{GS2} and \cite{GS1}. Consider finite subsets $A=\{\mathbf a_1, ..., \mathbf a_p\}$, $B=\{\mathbf b_1,...,\mathbf b_q\}$ and $C= k_1A \sqcup k_2B$ in $\N^n$ where $n \geq 2$, and $k_1$ and $k_2$ are positive integers. Let $\sA, \sB, \sC$ be the semigroups generated by the aforementioned sets, and $k[A] \cong k[\mathbf x_1,...,\mathbf x_p]/ I_A, k[B] \cong k[\mathbf y_1,...,\mathbf y_q]/ I_B$, and $k[C] \cong k[\mathbf x_1,...,\mathbf x_p,\mathbf y_1,...,\mathbf y_q]/ I_C$ be their respective semigroup rings. Set $R:= k[\mathbf x_1,...,\mathbf x_p,\mathbf y_1,...,\mathbf y_q]$. 
\begin{definition}
$\sC$ is said to be a gluing of $\sA$ and $\sB,$ written $C=A \Join B$, if $I_C = I_AR + I_BR + \la \rho \ra$ with $\rho = \bf x^{\bf c} - \bf y^{\bf d}$ for some $\bf c \in \N^p$ and $\bf d \in \N^q$. 
\end{definition}

Although any two numerical semigroups can be glued, it is not true in higher dimensions.  An affine semigroup $\sA \in \N^n$ is said to be non-degenerate if $A$ spans $\Q^n$.  Two non-degenerate semigroups in dimension $\ge 2$ cannot be glued. Precise conditions when such gluings exist can be found in \cite{GS22}.

 To extend the Wilf inequality to higher dimensions, it is necessary to have a sense of finiteness attached to the semigroup, so as to have an analogue of Frobenius number and the number of gaps.  There is a notion of \textit{Generalised Numerical Semigroup} (\cite{CDFFP}), namely, if $H(S) := \N^n \bs S$ is finite, we call $S$ a \textit{Generalised Numerical Semigroup} (GNS). Further, if $\mathcal{H}(S) := (\text{cone}(S)\bs S) \cap \N^n$ is finite, where cone$(S)$ is the cone of $S$ in $\Q^n_{\geq 0}$, then we call $S$ a $\mathcal C$-\textit{semigroup}. Thus, $S$ is a $\mathcal C$ semigroup if and only if there are only finitely many elements in $\N^n$ such that no integral multiples of them are in $S$.  

Suppose $S$ is a $\mathcal C$-semigroup (or GNS). Let $\prec$ be a monomial ordering satisfying the following condition:
\begin{equation}\label{ns}
    \mathbf a \in \N^n \implies |\{\mathbf b \in \N^n \mid \mathbf b \prec \mathbf{a}\}| < \infty
\end{equation}
Let $F(S)$, called the \textit{Frobenius Element} of $S$, denote the maximal element of the set $\mathcal H(S)$ (respectively, $H(S)$) with respect to $\prec$.  Set $\mathcal{N}(S) = |\mathcal H(S)| + |\{\mathbf x \in S \mid \mathbf x \prec F(S)\}|$ (respectively, $\mathcal{N}(S) = |H(S)| + |\{\mathbf x \in S \mid \mathbf x \prec F(S)\}|$). This exists due to the restriction applied on $\prec$. The Extended Wilf Conjecture is  formulated in \cite{wilfext} as follows $$|\{ \mathbf x \in S \mid \mathbf x \prec F(S) \}| \cdot \: e(S) \geq \mathcal{N}(S) + 1$$
\renewcommand{\C}{\mathcal C}

\begin{remark}
    
If $\sA$ and $\sB$ are generalized numerical semigroups, and $\sC = \sA \Join  \sB$, then $\sC$ cannot be a GNS. 
\end{remark}
To see this, we will first show that if $S=\langle \mathbf s_1,...,\mathbf s_t \rangle$ is a GNS in $\N^n$, then $\text{rank } S = n$, where $S$ also represents the $n \times t$ matrix $\begin{bmatrix}
    \mathbf{s}_1 & \cdots & \mathbf s_t
\end{bmatrix}$, that is,  a GNS must be non-degenerate. For any $\mathbf x \in \N^n$, if the rank of $S$ is strictly less than $n$, then one of the rows $R_i, 1\le i\le n$ of the matrix $S$,  say the $nth$, can be written as $\sum_{i=1}^{n-1}r_i R_i$  for some $r_i\in \Q$.   Then any element $(t_1, \ldots, t_n)^T\in \N^n$ must satisfy $t_n = \sum_{i=1}^{n-1}r_i t_i$.  Thus, there are infinitely many elements in $\N^n$ that will be missing from $S$.  This implies that $\N^n \bs S$ is not finite, contradicting the fact that $S$ is a GNS. 

It now follows from \cite{GS1} Corollary $1.6$, which states that if $\sA$ and $\sB$ can be glued, then $\text{rank} A + \text{rank} B = n+1$, where $A$ is the $n \times p$ matrix $
\begin{bmatrix}
\mathbf a_1 & \cdots & \mathbf a_p
\end{bmatrix}$, and $B$ is the $n \times q$ matrix $
\begin{bmatrix}
    \mathbf b_1 & \cdots & \mathbf b_q
\end{bmatrix}
$, that two generalized numerical semigroups $\sA$ and $\sB$ cannot be glued in $\N^n$ to obtain another GNS if $n \geq 2$. 

Thus, to generalize theorem \ref {Gluingiswilf}, we will consider gluings of $\mathcal C$-semigroups. Unlike Generalised Numerical Semigroups, whose gluings are never a GNS themselves, gluing of $\C$-semigroups can be a $\C$-semigroup. This is the only case when we can study the extended Wilf Conjecture under the gluing process, as we show in the following result. We thank Carmelo Cisto and Om Prakash for pointing out an error in the computations of gluing higher dimensional $\C$-semigroups in the earlier version.
\begin{theorem}
    A necessary condition for a gluing of $\C$-semigroups to be a $\C$-semigroup is $k_1=1$.
\end{theorem}
\begin{proof}
   Let $\sC = k_1 \sA \Join k_2 \sB$ be a gluing of $\C$-semigroups $\sA$ and $\sB$. Suppose $k_1>1$. Recall that $A= \{\mathbf a_1,..., \mathbf a_p\}$, and $A$ will also denote $n \times p$ matrix with columns $\displaystyle{\mathbf a_i = \begin{pmatrix}
        a_{i1}\\
        \vdots \\
        a_{in}
    \end{pmatrix}}, 1 \leq i \leq p$. Let rank $A=r \leq n$, and denote by $\mathbf u$ the gluable lattice point of $\sA$ and $\sB$. Due to the rank conditions on $A$ and $B$, we may assume that $\text{cone}(A)= \text{ span}_{\Q_{\geq 0}} \left\{\mathbf a_1',..., \mathbf a_{r-1}', \mathbf u \right\}$, where $\mathbf a_i = \gcd(a_{i1},...,a_{1n}) \mathbf a_i'$. Denote by $F$ the face span$_{\Q_{\geq 0}}\{\mathbf a_1'\}$ of cone$(A)$.

    By Theorem 9 of \cite{DRGGMAVT}, $\sC$ is a $\C$-semigroup only if $F \cap$ cone$(C) \bs \sC$ is finite, and by Lemma 9 (\cite{DRGGMAVT}),  $F \cap$ cone$(C) \bs \sC$ is finite if and only if $\gcd(\{\lambda \mid \lambda \mathbf a_1' \in F \cap C \})=1$. Suppose $\{a_{i_1},...,a_{i_l}\} \subseteq A \cap F$. Note that necessarily $l \geq 1$ and $a_1 \in A \cap F$. Then $\{k_1a_{i_1},...,k_1a_{i_l}\} \subseteq C \cap F$, and the aforementioned gcd is at least $k_1>1$.

Thus, $\mathcal H(C)$ is infinite, and $\sC$ cannot be a $\C$-semigroup.
\end{proof}
\begin{remark}

 Note that when $k_2 >1,$ and rank $B \geq 2$, then a symmetric argument shows that $\sC$ cannot be a $\C$-semigroup. This condition is not sufficient.  One can have $k_1=k_2=1$ for two $\C$- semigroups with the gluing not be a $\C$- semigroup. When $k_1=1$ and rank $B =1$, then for any $k_2$, $\sC$ is a $\C$-semigroup.  If $\sC$ is a gluing, then $B = \mathbf b[u_1,u_2, \ldots, u_q]$ for some $\mathbf b \in \sA, u_1,...,u_q \in \N$ (Theorem $1.7$, \cite{GS22}), and hence,  $\sA=\sC$. Thus, $\sC$ trivially satisfies the Extended Wilf Conjecture.     
\end{remark}
\begin{corollary}
Let $\sC = \sA \Join \sB$ be a gluing of two semigroups $\sA$ and $\sB$, where rank of $B$ is $1$ and $\sA$ and $\sB$ satisfy the extended Wilf Conjecture.  Suppose that the gluing $\sC$ is a $\C$-semigroup. Then $k_1=1$ and the extended Wilf conjecture is satisfied. 
\end{corollary}

Our findings present an intriguing observation regarding Wilf's inequality. While seemingly sharp in $\mathbb{N}$, the inequality takes on a different nature in the extended setting of $\mathbb{N}^n,$.  

Based on these compelling observations, we put forth
 a conjecture: 
 Let $C = A \Join B$, where $\sA$, $\sB$, and $\sC$ are all $\C$ semigroups. If $\sA$ and $\sB$ satisfy the extended Wilf inequality, then $\sC$ also inherits the Wilf inequality.


\begin{thebibliography}{99}

\bibitem{A}{A. Sammartano, Numerical semigroups with large embedding dimension satisfy Wilf’s conjecture, Semigroup Forum \bf{85} (2012). No. 3 439--447.}

\bibitem{CDFFP}{C. Cisto, M. Dipasquale, F. Gioia, Z. Flores, C. Peterson, R. Utano, A generalization of Wilf’s conjecture for generalized numerical semigroups, Semigroup Forum \bf{101} (2020), 1--23.}

 \bibitem{De}{
C. Delorme,
Sous-mono\"{i}des d'intersection compl\`{e}te de $N$,
Ann. Sci. \'{E}c. Norm. Sup. (4) {\bf 9} (1976), 145--154.}

\bibitem{W}{H.S. Wilf, A circle-of-lights algorithm for the “money-changing problem., {\em  Am. Math. Mon.} {\bf  85} (1978) No. 7,  562–565 }



 \bibitem{ros}{
J. C. Rosales,
On presentations of subsemigroups of $\N^n$,
Semigroup Forum {\bf 55} (1997), 152--159.}

\bibitem{DRGGMAVT}{J. D. D\'iaz Ram\'irez, J. I. Garc\'ia-Garc\'ia, D. Mar\'in-Arag\'on and A. Vigneron-Tenorio, Characterizing affine $\C$-semigroups. \textit{Ricerche mat} \textbf{71}, (2022), 283--296}


 \bibitem{wilfext}{J. Garc\'ia-Garc\'ia, D. Mar\'in-Arag\'on and A. Vigneron-Tenerio, An extension of Wilf’s conjecture to affine semigroups, Semigroup Forum {\bf 96} (2018), 396--408.}

 \bibitem{He}{
J. Herzog,
Generators and relations of abelian semigroups and semigroup rings,
Manuscr. Math. {\bf 3} (1970), 175--193.}


\bibitem{Del}{M. Delgado, Conjecture of Wilf - A survey,  	arXiv:1902.03461 (2019)}



\bibitem{GS2}{
P. Gimenez and H. Srinivasan,
Gluing semigroups: when and how. 
{\em Semigroup Forum} {\bf 101} (2020). No.3, 603-618.}


\bibitem{GS22}{P. Gimenez and H. Srinivasan,
On gluing semigroups in $\N^n$  and the consequences,
{\em Res. in Math. Sci. } {\bf 9} (2022). No.2 1-14.}

 
\bibitem{GS1}{
P. Gimenez and H. Srinivasan,
The structure of the minimal free resolution of semigroup rings obtained by gluing,
{\em J. Pure Appl. Alg.} {\bf 223} (2019), 1411--1426.}



\bibitem{Fr}{
R. Fr\"oberg, C. Gottlieb and R. H\"aggkvist
On numerical semigroups,
Semigroup Forum {\bf 35} (1986) 63-83}
 



 
 
 \end{thebibliography}
\end{document}